\documentclass[11pt]{article}

\usepackage[letterpaper,margin=1.1in]{geometry}
\usepackage[T1]{fontenc}
\usepackage{amsmath,amsthm,amssymb}
\usepackage{newpxtext}
\usepackage{newpxmath}
\usepackage{mathtools}
\usepackage{microtype}
\usepackage[hidelinks]{hyperref}
\usepackage[nameinlink,noabbrev]{cleveref}

\numberwithin{equation}{section}
\allowdisplaybreaks

\newtheorem{theorem}{Theorem}[section]
\newtheorem{proposition}[theorem]{Proposition}

\newtheorem{corollary}[theorem]{Corollary}
\theoremstyle{definition}

\newtheorem{example}[theorem]{Example}
\theoremstyle{remark}
\newtheorem{remark}[theorem]{Remark}

\crefname{theorem}{theorem}{theorems}
\crefname{proposition}{proposition}{propositions}
\crefname{lemma}{lemma}{lemmas}
\crefname{corollary}{corollary}{corollaries}
\crefname{definition}{definition}{definitions}
\crefname{example}{example}{examples}
\crefname{remark}{remark}{remarks}

\newcommand{\SP}{\mathrm{SP}}
\newcommand{\OP}{\mathrm{OP}}

\newcommand{\vac}{\mathbf{1}}
\newcommand{\cQ}{\mathcal{Q}}
\newcommand{\Yop}{\mathcal{Y}}
\newcommand{\Ht}{\mathsf{H}}

\newcommand{\Pf}{\operatorname{Pf}}

\newcommand{\K}{\mathbb{K}}
\newcommand{\Gam}{\Gamma}
\newcommand{\rhoop}[1]{\rho_{#1}}

\newcommand{\qbinom}[3]{\genfrac{[}{]}{0pt}{}{#1}{#2}_{#3}}

\title{Mixed Products of Modified Greaves--Jing--Zhu Operators}
\author{S.-J. Lee}
\date{}

\begin{document}

\maketitle

\begin{abstract}
Let $\Yop(z;t)$ be the modified Greaves--Jing--Zhu 
operator on the odd power-sum ring. We first point out that this
operator can be obtained from the classical neutral operator by a
simple diagonal change of variables. We then study products in which
the two deformation parameters are not necessarily the same. For two
parameters $t$ and $s$, we compute the scalar factor that appears in
the mixed product. This factor has an explicit exponential form and,
in a completed setting, can also be written as a quotient of infinite
$t$-Pochhammer products. We also give a recurrence for its
coefficients, a product formula for several mixed operators, and
formulas for the coefficients obtained after applying the operators to
$\vac$.

A particularly simple case occurs when $s=t^M$. In this case the
scalar factor becomes the finite quotient
$(u;t)_M/(-u;t)_M$. Its coefficients are signed principal
specializations of one-row Schur $Q$-functions. As a result, after
removing the signs, these coefficients are nonnegative palindromic
polynomials. We also give a Gaussian-binomial formula and a
finite-order recurrence. 
\end{abstract}

\section{Introduction}

Vertex operators provide a useful way to organize identities involving
symmetric functions, Heisenberg algebras, and free-fermion
representations. Greaves, Jing, and Zhu introduced a charged-fermion
operator construction for the $t$-Schur functions and the associated
$t$-Schur measure \cite{GreavesJingZhu}. We adapted the same
idea to the odd power-sum ring, which is the standard setting for
Schur $Q$-functions \cite{Lee}. The resulting modified operator has
the same scalar factor as the classical neutral operator when both
operators use the same parameter. Its modes generate shifted
$t$-Schur functions and lead to a two-row formula, a Pfaffian
Giambelli identity, a Cauchy identity, and a finite shifted Gessel
formula.

The goal of this paper is more limited. We only study what happens
when two modified operators with different parameters are multiplied.
 General linear transformations
of vertex-operator presentations have been studied in a broader
setting; see, for example, \cite{Rozhkovskaya}. In the present case,
the diagonal form of the modified Greaves--Jing--Zhu operator makes
the mixed factor especially explicit.

Let
\[
  \rho_t(p_n)=(1-t^n)p_n
  \qquad(n\geq1\text{ odd}).
\]
Our first observation is
\[
  \Yop(z;t)=\rho_t\Phi(z)\rho_t^{-1},
  \qquad
  \cQ_\lambda(X;t)=Q_\lambda[X-tX],
\]
where $\Phi(z)$ is the classical neutral operator. For two independent
parameters, the mixed product has the form
\begin{equation}\label{eq:intro1-mixed}
 \Yop(z;t)\Yop(w;s)
 =F_{t,s}(w/z):\Yop(z;t)\Yop(w;s):,
\end{equation}
where
\begin{equation}\label{eq:intro1-F}
 F_{t,s}(u)
 =\exp\!\left(
   -2\sum_{\substack{n\geq1\\n\text{ odd}}}
    \frac{1-s^n}{1-t^n}\frac{u^n}{n}
  \right).
\end{equation}
The specialization $s=t$ gives $(1-u)/(1+u)$, which is the usual
neutral factor. In the $t$-adic completion, or analytically for
$|t|<1$, this mixed factor may also be written as
\[
 F_{t,s}(u)
 =\frac{(u;t)_\infty(-su;t)_\infty}
        {(-u;t)_\infty(su;t)_\infty}.
\]
We also derive a recurrence for the coefficients of $F_{t,s}(u)$ and
use the mixed product formula to compute coefficients after applying
the operators to $\vac$.

The most concrete case is
\[
  s=t^M,
  \qquad M\in\mathbb Z_{>0}.
\]
Then the infinite product reduces to the finite product
\begin{equation}\label{eq:intro1-cyclo}
 F_{t,t^M}(u)
 =\prod_{j=0}^{M-1}\frac{1-t^ju}{1+t^ju}
 =\frac{(u;t)_M}{(-u;t)_M}.
\end{equation}
If
\[
 F_{t,t^M}(u)=\sum_{m\geq0}f_m^{[M]}(t)u^m,
\]
then
\begin{equation}\label{eq:intro1-principal}
 f_m^{[M]}(t)
 =(-1)^m q_m(1,t,\ldots,t^{M-1}).
\end{equation}
Therefore $(-1)^mf_m^{[M]}(t)$ is a nonnegative palindromic
polynomial. We give an explicit Gaussian-binomial formula and a
finite recurrence of order $M$.

The paper is organized as follows. In
\cref{sec:preliminaries} we review the odd power-sum ring and prove
the diagonal-conjugation and plethystic formulas. In \cref{sec:mixed}
we prove the general mixed product formula, its infinite-product
form, its coefficient recurrence, and the corresponding coefficient
formulas after applying the operators to $\vac$. In
\cref{sec:cyclotomic-mixed} we specialize to $s=t^M$ and study the
finite product and its coefficients.

\section{The odd power-sum ring and diagonal conjugation}
\label{sec:preliminaries}

\subsection{Strict partitions and Schur \texorpdfstring{$Q$}{Q}-functions}

A \emph{strict partition} is a finite sequence
$\lambda=(\lambda_1>\cdots>\lambda_\ell>0)$. We write
$|\lambda|=\lambda_1+\cdots+\lambda_\ell$ and
$\ell(\lambda)=\ell$. Let $\SP(N)$ denote the set of strict
partitions of $N$. Let $\OP(N)$ denote the set of partitions of $N$
all of whose parts are odd. Euler's identity gives
$|\SP(N)|=|\OP(N)|$.

Let
\[
 \Gam=\mathbb{Q}[p_1,p_3,p_5,\ldots]
\]
be the odd power-sum ring, graded by $\deg p_n=n$. For a partition
$\alpha=(1^{m_1}3^{m_3}\cdots)\in\OP(N)$, set
\[
 p_\alpha=\prod_i p_{\alpha_i},
 \qquad
 z_\alpha=\prod_{r\geq1}r^{m_r}m_r!.
\]
The elements $p_\alpha$, $\alpha\in\OP(N)$, form a basis of the
homogeneous component $\Gam_N$.

We use the standard Schur $Q$ scalar product
\begin{equation}\label{eq:Q-inner-product}
 \langle p_\alpha,p_\beta\rangle
 =2^{-\ell(\alpha)}z_\alpha\,\delta_{\alpha\beta}.
\end{equation}
The Schur $Q$-functions $Q_\lambda$, $\lambda\in\SP(N)$, form another
basis of $\Gam_N$. We set
\[
 P_\lambda=2^{-\ell(\lambda)}Q_\lambda.
\]
Then
\begin{equation}\label{eq:PQ-duality}
 \langle Q_\lambda,P_\mu\rangle=\delta_{\lambda\mu},
 \qquad
 \langle Q_\lambda,Q_\mu\rangle
 =2^{\ell(\lambda)}\delta_{\lambda\mu}.
\end{equation}
We follow the standard conventions in \cite[Chapter III, Section
8]{Macdonald}; see also \cite{JingSpin,Stembridge}.

The one-row functions $q_r=Q_{(r)}$ are determined by
\begin{equation}\label{eq:one-row-generating}
 Q_X(z)
 :=\sum_{r\geq0}q_r(X)z^r
 =\exp\!\left(
    2\sum_{\substack{n\geq1\\n\text{ odd}}}
      \frac{p_n(X)}{n}z^n
   \right)
 =\prod_i\frac{1+x_i z}{1-x_i z}.
\end{equation}
Here $q_0=1$ and $q_r=0$ for $r<0$. For $r\geq s\geq0$, define
\begin{equation}\label{eq:two-row-classical}
 Q_{(r,s)}
 =q_rq_s+2\sum_{k=1}^{s}(-1)^kq_{r+k}q_{s-k}.
\end{equation}
In particular, $Q_{(r,0)}=q_r$ and $Q_{(r,r)}=0$. If $\lambda$ has
odd length, append a zero part. The Pfaffian Giambelli identity is
\begin{equation}\label{eq:Q-Pfaffian}
 Q_\lambda
 =\Pf\bigl(Q_{(\lambda_i,\lambda_j)}\bigr).
\end{equation}

For later use, define coefficients $\chi_\alpha^\lambda$ by the
power-sum expansion
\begin{equation}\label{eq:spin-expansion}
 Q_\lambda
 =\sum_{\alpha\in\OP(|\lambda|)}
   \frac{2^{\ell(\alpha)}}{z_\alpha}
   \chi_\alpha^\lambda p_\alpha.
\end{equation}
With the usual normalization, these are the spin-character
coefficients. Their representation-theoretic interpretation will not
be used below; we only need \eqref{eq:spin-expansion} and the
orthogonality implied by \eqref{eq:Q-inner-product}.

\subsection{The modified neutral operator}

Let $t$ be an indeterminate and work over $\mathbb{Q}(t)$. For positive
odd $n$, set
\begin{equation}\label{eq:Heisenberg}
 a_{-n}=p_n,
 \qquad
 a_n=\frac{n}{2}\frac{\partial}{\partial p_n}.
\end{equation}
Then
\begin{equation}\label{eq:Heisenberg-commutator}
 [a_m,a_n]=\frac{m}{2}\delta_{m,-n}
 \qquad(m,n\text{ odd}).
\end{equation}
The classical neutral operator is
\begin{equation}\label{eq:Phi}
 \Phi(z)
 =\exp\!\left(
   \sum_{\substack{n\geq1\\n\text{ odd}}}
    \frac{2}{n}a_{-n}z^n
  \right)
  \exp\!\left(
   -\sum_{\substack{n\geq1\\n\text{ odd}}}
    \frac{2}{n}a_nz^{-n}
  \right).
\end{equation}
Its Fourier expansion is
\[
 \Phi(z)=\sum_{m\in\mathbb Z}\phi_m z^{-m}.
\]

Following \cite{Lee}, define the modified operator
\begin{equation}\label{eq:Yt}
 \Yop(z;t)
 =\exp\!\left(
   \sum_{\substack{n\geq1\\n\text{ odd}}}
    \frac{2(1-t^n)}{n}a_{-n}z^n
  \right)
  \exp\!\left(
   -\sum_{\substack{n\geq1\\n\text{ odd}}}
    \frac{2}{n(1-t^n)}a_nz^{-n}
  \right).
\end{equation}
In terms of derivatives, the second exponential equals
\[
 \exp\!\left(
  -\sum_{\substack{n\geq1\\n\text{ odd}}}
    \frac{1}{1-t^n}
    \frac{\partial}{\partial p_n}z^{-n}
 \right).
\]
Write
\begin{equation}\label{eq:Y-modes}
 \Yop(z;t)=\sum_{m\in\mathbb Z}\Yop_m(t)z^{-m}.
\end{equation}
For a strict partition
$\lambda=(\lambda_1>\cdots>\lambda_\ell>0)$, set
\begin{equation}\label{eq:shifted-t-Schur-definition}
 \cQ_\lambda(X;t)
 =\Yop_{-\lambda_1}(t)\cdots
  \Yop_{-\lambda_\ell}(t)\vac.
\end{equation}

\subsection{Diagonal conjugation}

Define the graded algebra automorphism
\begin{equation}\label{eq:rho-definition}
 \rhoop{t}:\Gam_{\mathbb{Q}(t)}\longrightarrow
             \Gam_{\mathbb{Q}(t)},
 \qquad
 \rhoop{t}(p_n)=(1-t^n)p_n
 \quad(n\text{ odd}).
\end{equation}
Its inverse is given by
$\rhoop{t}^{-1}(p_n)=(1-t^n)^{-1}p_n$.

\begin{proposition}[Diagonal conjugation]\label{prop:conjugation}
The modified operator is a conjugate of the classical neutral operator:
\begin{equation}\label{eq:conjugation}
 \Yop(z;t)=\rhoop{t}\Phi(z)\rhoop{t}^{-1}.
\end{equation}
Consequently,
\begin{equation}\label{eq:plethystic-realization}
 \cQ_\lambda(X;t)
 =\rhoop{t}Q_\lambda(X)
 =Q_\lambda[X-tX].
\end{equation}
\end{proposition}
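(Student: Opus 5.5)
The plan is to conjugate each Heisenberg generator separately and then pass the conjugation through the exponentials. Since $\rhoop{t}$ is an algebra automorphism, conjugating multiplication by $p_n$ gives multiplication by $\rhoop{t}(p_n)$:
\[
\rhoop{t}\,a_{-n}\,\rhoop{t}^{-1}=(1-t^n)a_{-n}.
\]
For the annihilation operators I will check the identity on the generators of $\Gam_{\mathbb{Q}(t)}$. Both $\rhoop{t}\,\partial_{p_n}\,\rhoop{t}^{-1}$ and $(1-t^n)^{-1}\partial_{p_n}$ are derivations, since conjugating a derivation by an algebra automorphism gives a derivation. It therefore suffices to compare them on each $p_m$. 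Computing on $p_m$:
\[
\rhoop{t}\,\partial_{p_n}\bigl((1-t^m)^{-1}p_m\bigr)=(1-t^n)^{-1}\delta_{mn},
\]
which agrees with the other side. Hence
\[
\rhoop{t}\,a_n\,\rhoop{t}^{-1}=(1-t^n)^{-1}a_n .
\]

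Next I pass the conjugation inside the exponentials. Conjugation by an automorphism commutes with taking exponentials of these operators. To make this precise, work coefficientwise in $z^{\pm1}$. Each coefficient of the second exponential is a finite sum on any fixed element, because $a_n$ lowers degree. Each coefficient of the first exponential is a polynomial in the $p_n$. So
\[
\rhoop{t}\exp(X)\rhoop{t}^{-1}=\exp(\rhoop{t}X\rhoop{t}^{-1})
\]
holds coefficientwise. Substituting the two conjugation formulas into \eqref{eq:Phi} gives exactly \eqref{eq:Yt}, which proves \eqref{eq:conjugation}. The only point needing care is this formal justification. I expect it to be a bookkeeping issue rather than a real obstacle, because $\rhoop{t}$ is degree-preserving.

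For the consequence, I compare Fourier modes in \eqref{eq:conjugation} to get
\[
\Yop_m(t)=\rhoop{t}\phi_m\rhoop{t}^{-1}.
\]
Since $\rhoop{t}^{-1}\vac=\vac$, the product telescopes:
\[
\cQ_\lambda(X;t)=\rhoop{t}\,\phi_{-\lambda_1}\cdots\phi_{-\lambda_\ell}\vac .
\]
Here I invoke the standard fact (Jing) that $\phi_{-\lambda_1}\cdots\phi_{-\lambda_\ell}\vac=Q_\lambda$ for strict $\lambda$, with the normalization of \eqref{eq:Phi}. For the one-row case this can be checked directly: $\Phi(z)\vac$ equals the generating function \eqref{eq:one-row-generating}. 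Finally, $\rhoop{t}(p_n)=p_n-t^np_n=p_n[X-tX]$ is precisely the plethystic substitution $X\mapsto X-tX$ on power sums. Since both sides are algebra homomorphisms, this gives $\rhoop{t}Q_\lambda=Q_\lambda[X-tX]$.
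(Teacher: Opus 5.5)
Your proposal is correct and follows the paper's proof. You conjugate $a_{-n}$ and $a_n$ separately, push the conjugation through the two exponentials, and then telescope the modes using $\rho_t^{-1}\vac=\vac$ together with the identification $\phi_{-\lambda_1}\cdots\phi_{-\lambda_\ell}\vac=Q_\lambda$. Your derivation-on-generators check and coefficientwise justification of the exponentials only make explicit steps that the paper states directly.
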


\begin{proof}
Because $a_{-n}$ is multiplication by $p_n$,
\[
 \rhoop{t}a_{-n}\rhoop{t}^{-1}
 =(1-t^n)a_{-n}.
\]
For $f\in\Gam_{\mathbb{Q}(t)}$,
\[
 \rhoop{t}
 \frac{\partial}{\partial p_n}
 \rhoop{t}^{-1}f
 =\frac{1}{1-t^n}
   \frac{\partial f}{\partial p_n}.
\]
Hence
\[
 \rhoop{t}a_n\rhoop{t}^{-1}
 =\frac{1}{1-t^n}a_n.
\]
Conjugating the two exponentials in \eqref{eq:Phi} gives exactly
\eqref{eq:Yt}, proving \eqref{eq:conjugation}.

Since $\rhoop{t}^{-1}\vac=\vac$, the mode expansion of
\eqref{eq:conjugation} gives
\[
 \begin{aligned}
 \cQ_\lambda(X;t)
 &=\rhoop{t}\phi_{-\lambda_1}\rhoop{t}^{-1}
   \cdots
   \rhoop{t}\phi_{-\lambda_\ell}\rhoop{t}^{-1}\vac\\
 &=\rhoop{t}
   \phi_{-\lambda_1}\cdots\phi_{-\lambda_\ell}\vac
 =\rhoop{t}Q_\lambda(X).
 \end{aligned}
\]
In plethystic notation,
$p_n[X-tX]=(1-t^n)p_n[X]$, which proves the final equality in
\eqref{eq:plethystic-realization}.
\end{proof}

\begin{corollary}\label{cor:power-sum-shifted}
For $\lambda\in\SP(N)$,
\begin{equation}\label{eq:power-sum-shifted}
 \cQ_\lambda(X;t)
 =\sum_{\alpha\in\OP(N)}
   \frac{2^{\ell(\alpha)}}{z_\alpha}
   \chi_\alpha^\lambda
   \prod_{a\in\alpha}(1-t^a)
   p_\alpha(X).
\end{equation}
In particular, $\cQ_\lambda(X;t)$ is polynomial in $t$ with
coefficients in $\Gam$.
\end{corollary}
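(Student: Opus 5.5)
The plan is to apply the automorphism $\rhoop{t}$ term by term to the power-sum expansion \eqref{eq:spin-expansion}, using the identity $\cQ_\lambda(X;t)=\rhoop{t}Q_\lambda(X)$ from \cref{prop:conjugation}. Because $\rhoop{t}$ is a graded algebra automorphism that is diagonal on the odd power sums, it acts on each monomial $p_\alpha$ by a scalar. Explicitly, for $\alpha=(\alpha_1,\ldots,\alpha_\ell)\in\OP(N)$ we have
\[
 \rhoop{t}(p_\alpha)=\prod_{i}\rhoop{t}(p_{\alpha_i})=\prod_i(1-t^{\alpha_i})\,p_\alpha .
\]
This holds because $\rhoop{t}$ is multiplicative and $\rhoop{t}(p_n)=(1-t^n)p_n$ for each odd $n$.

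The coefficients $2^{\ell(\alpha)}z_\alpha^{-1}\chi^\lambda_\alpha$ are rational numbers and contain no $t$. Since $\rhoop{t}$ is $\mathbb{Q}(t)$-linear, we can apply it to \eqref{eq:spin-expansion} one term at a time. This gives exactly \eqref{eq:power-sum-shifted}, where $\prod_{a\in\alpha}$ means the product over the parts of $\alpha$, taken with multiplicity.

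For the polynomiality claim, observe that each factor $\prod_{a\in\alpha}(1-t^a)$ lies in $\mathbb{Z}[t]$. Each coefficient $2^{\ell(\alpha)}z_\alpha^{-1}\chi^\lambda_\alpha$ lies in $\mathbb{Q}$, because $Q_\lambda\in\Gam$ has rational coefficients in the $p_\alpha$ basis. The sum over $\alpha\in\OP(N)$ is finite. Hence $\cQ_\lambda(X;t)\in\Gam\otimes\mathbb{Q}[t]=\Gam[t]$.

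None of these steps is a real obstacle. The only point that needs care is to note that $\rhoop{t}$ commutes with the rational scalar coefficients, so the expansion stays finite and no denominators in $t$ are introduced. The denominators $(1-t^n)^{-1}$ occur only in $\rhoop{t}^{-1}$, and $\rhoop{t}^{-1}$ acts trivially on $\vac$.
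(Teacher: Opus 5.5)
Your proposal is correct and matches the paper's proof: both use $\cQ_\lambda(X;t)=\rhoop{t}Q_\lambda(X)$ from \cref{prop:conjugation} and apply $\rhoop{t}$ term by term to \eqref{eq:spin-expansion}, using $\rhoop{t}(p_\alpha)=\prod_{a\in\alpha}(1-t^a)\,p_\alpha$. Your polynomiality argument (rational coefficients times a polynomial in $t$, summed over a finite set) just spells out what the paper leaves implicit.
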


\begin{proof}
Apply $\rhoop{t}$ to \eqref{eq:spin-expansion} and use
\[
 \rhoop{t}(p_\alpha)
 =\prod_{a\in\alpha}(1-t^a)p_\alpha.
\]
\end{proof}

\begin{remark}\label{rem:t-one}
The operator \eqref{eq:Yt} itself cannot be specialized directly at
$t=1$ because its second exponential contains $(1-t^n)^{-1}$. The
functions in \eqref{eq:power-sum-shifted}, however, are polynomial in
$t$. For every nonempty strict partition $\lambda$,
\[
 \cQ_\lambda(X;1)=0.
\]
Thus the specialization of the functions exists even though the
specialization of the operator does not.
\end{remark}

\begin{remark}\label{rem:Hopf}
Every odd power sum is primitive:
\[
 \Delta(p_n)=p_n\otimes1+1\otimes p_n.
\]
It follows that $\rhoop{t}$ is a graded Hopf algebra automorphism over
$\mathbb{Q}(t)$. In particular, same-parameter product and coproduct
identities for the $\cQ_\lambda(X;t)$ are transported directly from the
classical Schur $Q$ theory. The new part of the story appears when two
different parameters are compared.
\end{remark}

\section{Mixed operator products}
\label{sec:mixed}

We now work over $\K=\mathbb{Q}(t,s)$. Write
\begin{equation}\label{eq:A-B-definition}
 A_t(z)
 =\sum_{\substack{n\geq1\\n\text{ odd}}}
   \frac{2(1-t^n)}{n}a_{-n}z^n,
 \qquad
 B_t(z)
 =-\sum_{\substack{n\geq1\\n\text{ odd}}}
   \frac{2}{n(1-t^n)}a_nz^{-n}.
\end{equation}
Then
\[
 \Yop(z;t)=e^{A_t(z)}e^{B_t(z)}.
\]
We put the $A$-parts to the left and the $B$-parts to the right, and
write
\begin{equation}\label{eq:mixed-normal-order}
 :\Yop(z;t)\Yop(w;s):
 =e^{A_t(z)+A_s(w)}e^{B_t(z)+B_s(w)}.
\end{equation}
All scalar factors below are expanded as formal power series in
$w/z$.

\subsection{The mixed operator product expansion}

\begin{theorem}[Mixed operator product]\label{thm:mixed-OPE}
For independent parameters $t$ and $s$,
\begin{equation}\label{eq:mixed-OPE}
 \Yop(z;t)\Yop(w;s)
 =F_{t,s}(w/z):\Yop(z;t)\Yop(w;s):,
\end{equation}
where
\begin{equation}\label{eq:F-exponential}
 F_{t,s}(u)
 =\exp\!\left(
  -2\sum_{\substack{n\geq1\\n\text{ odd}}}
   \frac{1-s^n}{1-t^n}\frac{u^n}{n}
 \right)
 \in\K[[u]].
\end{equation}
When $s=t$,
\begin{equation}\label{eq:F-diagonal}
 F_{t,t}(u)=\frac{1-u}{1+u}.
\end{equation}
\end{theorem}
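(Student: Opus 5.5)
The plan is to use the standard Baker--Campbell--Hausdorff reordering for exponentials of Heisenberg operators whose commutator is a scalar. Write
\[
\Yop(z;t)\Yop(w;s)=e^{A_t(z)}e^{B_t(z)}e^{A_s(w)}e^{B_s(w)}.
\]
The only reordering needed is to move $e^{B_t(z)}$ past $e^{A_s(w)}$. The $A$-parts commute among themselves, since they are multiplication operators. The $B$-parts also commute among themselves, since they are derivations with constant coefficients. Once the middle two factors are swapped, we obtain $e^{A_t(z)}e^{A_s(w)}e^{B_t(z)}e^{B_s(w)}$ times a scalar, and this product equals $e^{A_t(z)+A_s(w)}e^{B_t(z)+B_s(w)}$, which is exactly \eqref{eq:mixed-normal-order}.

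The key computation is the commutator $[B_t(z),A_s(w)]$, using \eqref{eq:Heisenberg-commutator}. Only the pairs $a_n, a_{-n}$ with the same odd $n\ge1$ contribute, and $[a_n,a_{-n}]=n/2$. This gives
\[
[B_t(z),A_s(w)]=-\sum_{n\text{ odd}}\frac{2}{n(1-t^n)}\cdot\frac{2(1-s^n)}{n}\cdot\frac n2\,\frac{w^n}{z^n}
=-2\sum_{n\text{ odd}}\frac{1-s^n}{1-t^n}\frac{(w/z)^n}{n}.
\]
This is a scalar in $\K[[w/z]]$. Hence $e^{B}e^{A}=e^{[B,A]}e^{A}e^{B}$, which yields \eqref{eq:mixed-OPE} with $F_{t,s}$ as in \eqref{eq:F-exponential}.

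The main point requiring care is that this identity is legitimate as formal series. I would justify it by acting on a fixed homogeneous element $f\in\Gam_{\K}$. On such an $f$, $e^{B_t(z)}$ acts as a finite sum, because $f$ is a polynomial and each $a_n$ lowers degree by $n$. Each coefficient of $z^{-a}w^{b}$ in the product is then a finite computation. The identity $e^Be^A=e^{[B,A]}e^Ae^B$ for a central commutator then follows coefficientwise. Concretely, one can check it on one mode pair $a_n,a_{-n}$ at a time: $e^{\beta a_n}$ acts by the shift $p_n\mapsto p_n+\beta n/2$, so $e^{\beta a_n}e^{\alpha p_n}=e^{\alpha\beta n/2}e^{\alpha p_n}e^{\beta a_n}$. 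Independent modes commute, so the general case follows mode by mode. Alternatively, one can invoke \cref{prop:conjugation}, but different conjugations $\rhoop{t}$ and $\rhoop{s}$ appear on the two factors, so the direct computation is cleaner.

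For \eqref{eq:F-diagonal}, set $s=t$. The exponent becomes $-2\sum_{n\text{ odd}}u^n/n=\log(1-u)-\log(1+u)$, using $\log\frac{1+u}{1-u}=2\sum_{n\text{ odd}}u^n/n$. Therefore $F_{t,t}(u)=(1-u)/(1+u)$, which agrees with \eqref{eq:one-row-generating} evaluated at a single variable $x=1$, $z=-u$.
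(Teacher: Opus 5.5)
Your proposal is correct and follows essentially the same route as the paper: compute the scalar commutator $[B_t(z),A_s(w)]=-2\sum_{n\text{ odd}}\frac{1-s^n}{1-t^n}\frac{(w/z)^n}{n}$ from the Heisenberg relation, move $e^{B_t(z)}$ past $e^{A_s(w)}$ using $e^Be^A=e^{[B,A]}e^Ae^B$, and obtain the case $s=t$ from $2\sum_{n\text{ odd}}u^n/n=\log\frac{1+u}{1-u}$. Your mode-by-mode justification of the reordering identity, acting on homogeneous elements, is a useful check that the paper leaves implicit.
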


\begin{proof}
The $A$-parts commute with each other, and the $B$-parts also commute
with each other. By \eqref{eq:Heisenberg-commutator},
\[
 \begin{aligned}
 [B_t(z),A_s(w)]
 &=-\sum_{\substack{m,n\geq1\\m,n\text{ odd}}}
   \frac{2}{m(1-t^m)}
   \frac{2(1-s^n)}{n}
   [a_m,a_{-n}]z^{-m}w^n\\
 &=-2\sum_{\substack{n\geq1\\n\text{ odd}}}
   \frac{1-s^n}{1-t^n}\frac{(w/z)^n}{n}.
 \end{aligned}
\]
This commutator is a scalar. Therefore
\[
 e^{B_t(z)}e^{A_s(w)}
 =e^{[B_t(z),A_s(w)]}e^{A_s(w)}e^{B_t(z)}.
\]
Substituting this into
$e^{A_t(z)}e^{B_t(z)}e^{A_s(w)}e^{B_s(w)}$ proves
\eqref{eq:mixed-OPE} and \eqref{eq:F-exponential}.

If $s=t$, the ratio in \eqref{eq:F-exponential} is $1$. Since
\[
 2\sum_{\substack{n\geq1\\n\text{ odd}}}\frac{u^n}{n}
 =\log\frac{1+u}{1-u},
\]
we obtain \eqref{eq:F-diagonal}.
\end{proof}

\begin{corollary}[Several mixed operators]\label{cor:multiple-OPE}
For parameters $t_1,\ldots,t_r$,
\begin{equation}\label{eq:multiple-OPE}
 \Yop(z_1;t_1)\cdots\Yop(z_r;t_r)
 =\prod_{1\leq i<j\leq r}
  F_{t_i,t_j}(z_j/z_i)
  :\Yop(z_1;t_1)\cdots\Yop(z_r;t_r):.
\end{equation}
Each factor is expanded in nonnegative powers of $z_j/z_i$.
\end{corollary}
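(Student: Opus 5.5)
The plan is to argue by induction on $r$, generalizing the commutator computation in the proof of \cref{thm:mixed-OPE}. Here $:\Yop(z_1;t_1)\cdots\Yop(z_r;t_r):$ means
\[
 e^{A_{t_1}(z_1)+\cdots+A_{t_r}(z_r)}\,e^{B_{t_1}(z_1)+\cdots+B_{t_r}(z_r)},
\]
which extends \eqref{eq:mixed-normal-order}. The case $r=1$ is trivial, and $r=2$ is \cref{thm:mixed-OPE} with $(t,s)=(t_1,t_2)$.

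The key input is the scalar commutator from that proof. The argument there never used any relation between the two parameters or the two variables, so for any $i,j$ it gives
\[
 [B_{t_i}(z_i),A_{t_j}(z_j)]=\log F_{t_i,t_j}(z_j/z_i),
\]
expanded in nonnegative powers of $z_j/z_i$. All $A$'s commute among themselves, and so do all $B$'s. Since these commutators are central, the identity $e^Be^A=e^{[B,A]}e^Ae^B$ holds for sums of $B$'s and $A$'s. By bilinearity,
\[
 \Bigl[\sum_{i<r}B_{t_i}(z_i),\,A_{t_r}(z_r)\Bigr]=\sum_{i<r}\log F_{t_i,t_r}(z_r/z_i).
\]

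For the inductive step, assume the formula for $r-1$ factors. Multiply on the right by $\Yop(z_r;t_r)=e^{A_{t_r}(z_r)}e^{B_{t_r}(z_r)}$. Move $e^{A_{t_r}(z_r)}$ leftward past $e^{\sum_{i<r}B_{t_i}(z_i)}$; this produces the scalar $\prod_{i<r}F_{t_i,t_r}(z_r/z_i)$. Then merge $e^{A_{t_r}(z_r)}$ into the $A$-exponential and $e^{B_{t_r}(z_r)}$ into the $B$-exponential, which is allowed because the $A$'s commute with each other and the $B$'s do too. Combined with the inductive hypothesis, this gives exactly $\prod_{i<j\le r}F_{t_i,t_j}(z_j/z_i)$ times the normal-ordered product.

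The only point requiring care is formal convergence. Every product and scalar factor must be a well-defined element of the appropriate completion, namely series in $z_2/z_1,\ldots,z_r/z_{r-1}$ acting on $\Gam_{\K}$, where $\K=\mathbb{Q}(t_1,\ldots,t_r)$. The $B$-exponential acts locally nilpotently on each graded piece, so applying any operator to a fixed element yields finitely many terms in each degree. The scalar factors are power series in $z_j/z_i$ with $i<j$, as in the theorem. The expansion direction is fixed by the operator ordering: $B_{t_i}(z_i)$ carries negative powers of $z_i$ and $A_{t_j}(z_j)$ carries positive powers of $z_j$. I expect no real obstacle beyond stating this bookkeeping carefully.
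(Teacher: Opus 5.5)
Your proof is correct and matches the paper's argument. The paper also commutes $B$-parts past the $A$-parts to their right, with each ordered pair $i<j$ contributing the central factor $e^{[B_{t_i}(z_i),A_{t_j}(z_j)]}=F_{t_i,t_j}(z_j/z_i)$ from \cref{thm:mixed-OPE}. Your induction on $r$, which moves $e^{A_{t_r}(z_r)}$ left past $e^{\sum_{i<r}B_{t_i}(z_i)}$, is just a more carefully organized version of that same bookkeeping.
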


\begin{proof}
Move the $B$-part of the $i$th operator through the $A$-parts of the
operators to its right. Each ordered pair $i<j$ contributes exactly the
scalar factor computed in \cref{thm:mixed-OPE}.
\end{proof}

\subsection{Infinite-product form and coefficient recurrence}

For $m\geq0$, let
\[
 (a;t)_m=\prod_{j=0}^{m-1}(1-at^j),
 \qquad
 (a;t)_\infty=\prod_{j\geq0}(1-at^j).
\]

\begin{proposition}\label{prop:F-Pochhammer}
Analytically for $|t|<1$, or formally in the $t$-adic completion,
\begin{equation}\label{eq:F-Pochhammer}
 F_{t,s}(u)
 =\frac{(u;t)_\infty(-su;t)_\infty}
        {(-u;t)_\infty(su;t)_\infty}.
\end{equation}
\end{proposition}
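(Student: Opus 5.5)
We prove this by expanding the ratio $(1-s^n)/(1-t^n)$ in the exponent of \eqref{eq:F-exponential} as a geometric series in $t$, and then recognizing each resulting piece as the logarithm of a $t$-Pochhammer factor. For $|t|<1$, or in the $t$-adic completion of $\K[[u]]$, we have
\[
 \frac{1-s^n}{1-t^n}=\sum_{j\geq0}t^{jn}-s^n\sum_{j\geq0}t^{jn}.
\]
Substituting this into the exponent and interchanging the sums over $n$ and $j$ gives
\[
 -2\sum_{\substack{n\geq1\\n\text{ odd}}}\frac{1-s^n}{1-t^n}\frac{u^n}{n}
 =\sum_{j\geq0}\Bigl(-2\sum_{n\text{ odd}}\frac{(t^ju)^n}{n}
 +2\sum_{n\text{ odd}}\frac{(st^ju)^n}{n}\Bigr).
\]
The interchange is justified in either setting. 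Formally, the coefficient of each $u^n$ is a convergent series in the $t$-adic topology. Analytically, the double series converges absolutely for $|u|$ small and $|t|<1$.

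Next we apply the identity already used in the proof of \cref{thm:mixed-OPE}:
\[
 2\sum_{n\text{ odd}}\frac{x^n}{n}=\log\frac{1+x}{1-x}.
\]
Taking $x=t^ju$ and $x=st^ju$, the $j$-th summand becomes
\[
 \log\frac{(1-t^ju)(1+st^ju)}{(1+t^ju)(1-st^ju)}.
\]
Exponentiating, the sum over $j$ turns into a product over $j$. Collecting the four types of factors gives exactly $(u;t)_\infty(-su;t)_\infty/\bigl((-u;t)_\infty(su;t)_\infty\bigr)$, which is \eqref{eq:F-Pochhammer}.

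The only delicate step is turning the exponential of an infinite sum into an infinite product. In the formal setting, the $j$-th factor is congruent to $1$ modulo $t^j$. Hence the partial products converge $t$-adically coefficientwise in $u$, and $\exp$ is continuous for this topology, so the exchange is valid. Analytically, for $|t|<1$ and $|u|<1$ (with $|su|<1$), all products converge absolutely and the logarithms are legitimate. The identity then extends to the common domain by analytic continuation.

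As a sanity check, setting $s=t$ telescopes the product to $(1-u)/(1+u)$, which agrees with \eqref{eq:F-diagonal}.
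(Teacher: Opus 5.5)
Your proof is correct and is essentially the paper's argument: you expand $(1-s^n)/(1-t^n)$ as a geometric series in $t$, apply $\exp\bigl(-2\sum_{n\text{ odd}}x^n/n\bigr)=(1-x)/(1+x)$ with $x=t^ju$ and $x=st^ju$, and collect the factors into the four Pochhammer products. Your justifications for interchanging the sums and for passing from the exponential of a series to an infinite product fill in details the paper leaves implicit (one small wording fix: $t$ is a unit in $\K$, so the formal setting should be $\mathbb{Q}(s)((t))[[u]]$ with the $t$-adic topology on coefficients, not a $t$-adic completion of $\K[[u]]$).
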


\begin{proof}
Use
\[
 \frac{1-s^n}{1-t^n}
 =\sum_{j\geq0}t^{jn}
  -\sum_{j\geq0}(st^j)^n
\]
in \eqref{eq:F-exponential}. For each variable $x$,
\[
 \exp\!\left(
  -2\sum_{n\text{ odd}}\frac{x^n}{n}
 \right)
 =\frac{1-x}{1+x}.
\]
It follows that
\[
 F_{t,s}(u)
 =\prod_{j\geq0}
  \frac{(1-t^ju)(1+st^ju)}
       {(1+t^ju)(1-st^ju)},
\]
which is \eqref{eq:F-Pochhammer}.
\end{proof}

Write
\begin{equation}\label{eq:f-coefficients}
 F_{t,s}(u)=\sum_{m\geq0}f_m(t,s)u^m,
 \qquad f_0(t,s)=1.
\end{equation}

\begin{proposition}[Coefficient recurrence]\label{prop:f-recurrence}
For $m\geq1$,
\begin{equation}\label{eq:f-recurrence}
 m f_m(t,s)
 =-2\sum_{\substack{1\leq j\leq m\\j\text{ odd}}}
   \frac{1-s^j}{1-t^j}
   f_{m-j}(t,s).
\end{equation}
\end{proposition}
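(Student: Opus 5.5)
The plan is to use logarithmic differentiation of the exponential form \eqref{eq:F-exponential} from \cref{thm:mixed-OPE}. Write $F_{t,s}(u)=\exp(G(u))$ with
\[
 G(u)=-2\sum_{\substack{n\geq1\\n\text{ odd}}}\frac{1-s^n}{1-t^n}\frac{u^n}{n}\in\K[[u]].
\]
Since $G$ has zero constant term, $\exp(G)$ is a well-defined formal power series with constant term $1$, which gives $f_0(t,s)=1$. The formal chain rule then yields the identity $uF_{t,s}'(u)=uG'(u)\,F_{t,s}(u)$ in $\K[[u]]$.

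Next I compute both sides explicitly. On the left, $uF_{t,s}'(u)=\sum_{m\geq1}m f_m(t,s)u^m$. On the right, differentiating $G$ term by term cancels the factor $1/n$, so
\[
 uG'(u)=-2\sum_{\substack{j\geq1\\ j\text{ odd}}}\frac{1-s^j}{1-t^j}u^j.
\]
Multiplying this by $\sum_{k\geq0}f_k(t,s)u^k$ and extracting the coefficient of $u^m$ for $m\geq1$ gives a sum over odd $j$ with $1\leq j\leq m$ and $k=m-j\geq0$. This is exactly the right-hand side of \eqref{eq:f-recurrence}, and comparing coefficients proves the recurrence.

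There is no real obstacle. The only point to check is that the formal identity $(\exp G)'=G'\exp G$ is legitimate in $\K[[u]]$. This holds because $G(0)=0$, so every coefficient of $\exp G$ is a finite sum, and the identity can be verified termwise from $(G^k)'=kG^{k-1}G'$. The coefficients $(1-s^j)/(1-t^j)$ lie in $\K=\mathbb{Q}(t,s)$, so no completion is needed here, unlike in \cref{prop:F-Pochhammer}. As a sanity check, for $s=t$ the recurrence reduces to $mf_m=-2\sum_{j\text{ odd}}f_{m-j}$. This is consistent with \eqref{eq:F-diagonal}: there $f_m=2(-1)^m$ for $m\geq1$, and for $m=1$ both sides equal $-2$.
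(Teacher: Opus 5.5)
Your proposal is correct and follows the paper's own argument: logarithmically differentiate the exponential form \eqref{eq:F-exponential} to get $uF_{t,s}'(u)=-2F_{t,s}(u)\sum_{j\text{ odd}}\frac{1-s^j}{1-t^j}u^j$, then compare coefficients of $u^m$. Your added checks (that the formal chain rule is valid in $\K[[u]]$ and the $s=t$ consistency test) are accurate and do not change the route.
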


\begin{proof}
Logarithmically differentiate \eqref{eq:F-exponential}:
\[
 uF'_{t,s}(u)
 =-2F_{t,s}(u)
   \sum_{j\text{ odd}}
    \frac{1-s^j}{1-t^j}u^j.
\]
Comparing the coefficient of $u^m$ gives
\eqref{eq:f-recurrence}.
\end{proof}

\subsection{Coefficients after applying the operators to \texorpdfstring{$\vac$}{1}}

Define the modified one-row series
\begin{equation}\label{eq:Ht-definition}
 \Ht_t(X;z)
 :=\Yop(z;t)\vac
 =\exp\!\left(
   2\sum_{\substack{n\geq1\\n\text{ odd}}}
    \frac{1-t^n}{n}p_n(X)z^n
  \right)
 =\sum_{r\geq0}q_r(X;t)z^r.
\end{equation}
Thus $q_r(X;t)=\Yop_{-r}(t)\vac$.

For $r,m\geq0$, define the mixed two-row coefficient
\begin{equation}\label{eq:mixed-two-row-definition}
 \cQ_{r,m}^{t,s}(X)
 :=[z^rw^m]\Yop(z;t)\Yop(w;s)\vac
 =\Yop_{-r}(t)\Yop_{-m}(s)\vac.
\end{equation}
When $t\neq s$, these coefficients are not skew-symmetric in $r$ and
$m$, so we do not regard them as a strict-partition basis.

\begin{proposition}\label{prop:mixed-two-row}
For $r,m\geq0$,
\begin{equation}\label{eq:mixed-two-row}
 \cQ_{r,m}^{t,s}(X)
 =\sum_{k=0}^{m}
  f_k(t,s)q_{r+k}(X;t)q_{m-k}(X;s).
\end{equation}
For $s=t$, this becomes
\begin{equation}\label{eq:two-row-recovered}
 \cQ_{r,m}^{t,t}(X)
 =q_r(X;t)q_m(X;t)
  +2\sum_{k=1}^{m}(-1)^k
    q_{r+k}(X;t)q_{m-k}(X;t).
\end{equation}
\end{proposition}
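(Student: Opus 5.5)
We apply the mixed operator product of \cref{thm:mixed-OPE} to the vacuum and then read off the coefficient of $z^rw^m$. Since every $a_n$ with $n>0$ annihilates $\vac$, we have $e^{B_t(z)+B_s(w)}\vac=\vac$. Hence
\[
 :\Yop(z;t)\Yop(w;s):\vac
 =e^{A_t(z)}e^{A_s(w)}\vac
 =\Ht_t(X;z)\,\Ht_s(X;w),
\]
where the last equality uses \eqref{eq:Ht-definition}. Here we use that $e^{A_t(z)}$ is multiplication by $\Ht_t(X;z)$, and that the $A$-parts commute. By \eqref{eq:mixed-OPE}, we therefore get
\[
 \Yop(z;t)\Yop(w;s)\vac
 =F_{t,s}(w/z)\,\Ht_t(X;z)\,\Ht_s(X;w).
\]

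Next we expand the right-hand side using \eqref{eq:f-coefficients}:
\[
 \sum_{k\geq0}\sum_{a,b\geq0}f_k(t,s)\,q_a(X;t)\,q_b(X;s)\,z^{a-k}w^{b+k}.
\]
The coefficient of $z^rw^m$ comes from the terms with $b=m-k$ and $a=r+k$. The condition $b\geq0$ forces $0\leq k\leq m$, and $a=r+k\geq0$ holds automatically. This gives \eqref{eq:mixed-two-row}.

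The one point needing care is that the product of the series in $w/z$ with the power series in $z$ and $w$ has well-defined coefficients. For each fixed exponent of $w$, only finitely many $k$ contribute, so each coefficient is a finite sum and the extraction is legitimate. The identification $[z^rw^m]\Yop(z;t)\Yop(w;s)\vac=\Yop_{-r}(t)\Yop_{-m}(s)\vac$ follows from the mode expansion \eqref{eq:Y-modes}.

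For the specialization $s=t$, we use \eqref{eq:F-diagonal}. Expanding,
\[
 \frac{1-u}{1+u}=(1-u)\sum_{j\geq0}(-u)^j=1+2\sum_{k\geq1}(-1)^ku^k,
\]
so $f_0=1$ and $f_k(t,t)=2(-1)^k$ for $k\geq1$. Substituting these values into \eqref{eq:mixed-two-row} yields \eqref{eq:two-row-recovered}.

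No step is a serious obstacle. The only subtlety is the expansion-domain bookkeeping noted above.
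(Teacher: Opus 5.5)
Your proof is correct and follows essentially the same route as the paper: apply \cref{thm:mixed-OPE} to $\vac$, extract the coefficient of $z^rw^m$ from $F_{t,s}(w/z)\Ht_t(X;z)\Ht_s(X;w)$, and then specialize using $(1-u)/(1+u)=1+2\sum_{k\geq1}(-1)^ku^k$. You also spell out details the paper leaves implicit: why $e^{B_t(z)+B_s(w)}\vac=\vac$, and why each coefficient of the product is a finite sum.
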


\begin{proof}
Applying \eqref{eq:mixed-OPE} to $\vac$ gives
\[
 \Yop(z;t)\Yop(w;s)\vac
 =F_{t,s}(w/z)\Ht_t(X;z)\Ht_s(X;w).
\]
Substitute \eqref{eq:f-coefficients} and
\eqref{eq:Ht-definition}. To obtain $z^rw^m$, the three indices must
satisfy
\[
 z^{-k}z^{r+k}=z^r,
 \qquad
 w^kw^{m-k}=w^m,
\]
which gives \eqref{eq:mixed-two-row}. If $s=t$, then
\[
 \frac{1-u}{1+u}=1+2\sum_{k\geq1}(-1)^ku^k,
\]
so \eqref{eq:two-row-recovered} follows.
\end{proof}

\section{Cyclotomic mixed products}
\label{sec:cyclotomic-mixed}

Fix a positive integer $M$ and define the finite plethystic alphabet
\begin{equation}\label{eq:AM-definition}
 A_M(t)=1+t+\cdots+t^{M-1}.
\end{equation}
We call a polynomial $f(t)$ \emph{palindromic of darga $d$} if
$f(t)=t^d f(t^{-1})$; this convention permits zero coefficients at
both ends of the ambient degree interval $[0,d]$.
For every $n\geq1$,
\begin{equation}\label{eq:power-AM}
 p_n[A_M(t)]
 =1+t^n+\cdots+t^{(M-1)n}
 =\frac{1-t^{Mn}}{1-t^n}.
\end{equation}

\subsection{Finite mixed scalar factors}

\begin{theorem}[Cyclotomic mixed product]\label{thm:cyclotomic-OPE}
For $M\geq1$,
\begin{equation}\label{eq:cyclotomic-OPE}
 \Yop(z;t)\Yop(w;t^M)
 =\frac{(w/z;t)_M}{(-w/z;t)_M}
  :\Yop(z;t)\Yop(w;t^M):.
\end{equation}
Equivalently,
\begin{equation}\label{eq:F-cyclotomic}
 F_{t,t^M}(u)
 =\prod_{j=0}^{M-1}\frac{1-t^ju}{1+t^ju}
 =\frac{(u;t)_M}{(-u;t)_M}.
\end{equation}
\end{theorem}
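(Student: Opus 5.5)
The plan is to specialize \cref{thm:mixed-OPE} to $s=t^M$ and evaluate the exponential in \eqref{eq:F-exponential} in closed form. The key input is \eqref{eq:power-AM}. For odd $n\geq1$ it gives
\[
 \frac{1-s^n}{1-t^n}\Big|_{s=t^M}
 =\frac{1-t^{Mn}}{1-t^n}
 =\sum_{j=0}^{M-1}t^{jn}.
\]
This is a genuine polynomial identity in $\mathbb{Q}(t)$, so no completion or analytic hypothesis is needed, in contrast with \cref{prop:F-Pochhammer}. Since $t^M$ is transcendental over $\mathbb{Q}$, setting $s=t^M$ is a well-defined specialization of the relevant coefficients $\frac{1-s^n}{1-t^n}\in\mathbb{Q}(t,s)$, which have no poles there. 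Alternatively, one can rerun the proof of \cref{thm:mixed-OPE} directly with parameter $t^M$ in place of $s$: the commutator $[B_t(z),A_{t^M}(w)]$ is computed exactly as before.

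Next I substitute this finite sum into \eqref{eq:F-exponential} and interchange the finite sum over $j$ with the sum over odd $n$:
\[
 F_{t,t^M}(u)
 =\prod_{j=0}^{M-1}\exp\!\left(-2\sum_{n\text{ odd}}\frac{(t^ju)^n}{n}\right).
\]
Each factor is then evaluated with the identity $2\sum_{n\text{ odd}}x^n/n=\log\frac{1+x}{1-x}$, already used in \cref{thm:mixed-OPE}, taken at $x=t^ju$. This is legitimate in $\mathbb{Q}(t)[[u]]$, since $t^ju$ has zero constant term in $u$. Each factor is therefore $(1-t^ju)/(1+t^ju)$, and the product equals $(u;t)_M/(-u;t)_M$ by the definition of the finite Pochhammer symbol. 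This proves \eqref{eq:F-cyclotomic}, and substituting $u=w/z$ into \eqref{eq:mixed-OPE} gives \eqref{eq:cyclotomic-OPE}.

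As a consistency check, the same result follows from \cref{prop:F-Pochhammer}. With $s=t^M$, the quotients $(-t^Mu;t)_\infty/(-u;t)_\infty$ and $(u;t)_\infty/(t^Mu;t)_\infty$ telescope to finite products. I would present the direct argument as the main proof, because it avoids the completion. The only step needing care is justifying the specialization $s=t^M$ within $\K=\mathbb{Q}(t,s)$, which is routine given that the coefficients involved are pole-free at $s=t^M$.
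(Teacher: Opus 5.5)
Your proposal is correct and follows essentially the same route as the paper: specialize \eqref{eq:F-exponential} at $s=t^M$, rewrite $(1-t^{Mn})/(1-t^n)$ as the finite sum $\sum_{j=0}^{M-1}t^{jn}$ via \eqref{eq:power-AM}, split the logarithm into the $M$ pieces $\log\frac{1-t^ju}{1+t^ju}$, exponentiate, and then invoke \cref{thm:mixed-OPE}. One small remark: the transcendence of $t^M$ is beside the point for the specialization, because the coefficients $\frac{1-s^n}{1-t^n}$ already lie in $\mathbb{Q}(t)[s]$; your alternative of rerunning the commutator computation directly over $\mathbb{Q}(t)$ with parameter $t^M$ avoids the issue entirely.
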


\begin{proof}
By \eqref{eq:F-exponential} and \eqref{eq:power-AM},
\[
 \begin{aligned}
 \log F_{t,t^M}(u)
 &=-2\sum_{n\text{ odd}}
    \left(\sum_{j=0}^{M-1}t^{jn}\right)
    \frac{u^n}{n}\\
 &=\sum_{j=0}^{M-1}
   \log\frac{1-t^ju}{1+t^ju}.
 \end{aligned}
\]
Exponentiating gives \eqref{eq:F-cyclotomic}, and
\eqref{eq:cyclotomic-OPE} follows from
\cref{thm:mixed-OPE}.
\end{proof}

Write
\begin{equation}\label{eq:fM-definition}
 F_{t,t^M}(u)=\sum_{m\geq0}f_m^{[M]}(t)u^m.
\end{equation}

\begin{proposition}[Principal specialization]\label{prop:fM-principal}
For every $m\geq0$,
\begin{equation}\label{eq:fM-Q-principal}
 f_m^{[M]}(t)
 =(-1)^m q_m(1,t,\ldots,t^{M-1}).
\end{equation}
In particular,
\begin{equation}\label{eq:fM-sign-positive}
 (-1)^m f_m^{[M]}(t)\in\mathbb{N}[t].
\end{equation}
Moreover,
\begin{equation}\label{eq:fM-explicit}
 (-1)^m f_m^{[M]}(t)
 =\sum_{a=0}^{\min(M,m)}
   t^{\binom{a}{2}}
   \qbinom{M}{a}{t}
   \qbinom{M+m-a-1}{m-a}{t}.
\end{equation}
\end{proposition}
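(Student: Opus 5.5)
The plan is to start from the finite product form in \cref{thm:cyclotomic-OPE} and compare it with the one-row generating function \eqref{eq:one-row-generating}. Taking the finite alphabet $X=\{1,t,\ldots,t^{M-1}\}$ there gives
\[
 \sum_{r\geq0}q_r(1,t,\ldots,t^{M-1})z^r=\prod_{j=0}^{M-1}\frac{1+t^jz}{1-t^jz}.
\]
Substituting $z=-u$ produces exactly $\prod_{j}(1-t^ju)/(1+t^ju)=F_{t,t^M}(u)$ by \eqref{eq:F-cyclotomic}. Comparing coefficients of $u^m$ then yields \eqref{eq:fM-Q-principal}.

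For \eqref{eq:fM-sign-positive}, I will expand each factor as
\[
 \frac{1+t^jz}{1-t^jz}=1+2\sum_{k\geq1}t^{jk}z^k .
\]
Every coefficient is then in $\mathbb{N}[t]$, and so is their product. Alternatively, positivity will follow directly from the explicit formula \eqref{eq:fM-explicit}, since Gaussian binomials have nonnegative integer coefficients.

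For \eqref{eq:fM-explicit}, I will factor the generating function as
\[
 \prod_{j=0}^{M-1}(1+t^jz)\cdot\prod_{j=0}^{M-1}\frac{1}{1-t^jz},
\]
and apply the two standard $q$-binomial theorems:
\[
 \prod_{j=0}^{M-1}(1+t^jz)=\sum_{a=0}^{M}t^{\binom a2}\qbinom{M}{a}{t}z^a,
 \qquad
 \prod_{j=0}^{M-1}\frac{1}{1-t^jz}=\sum_{b\geq0}\qbinom{M+b-1}{b}{t}z^b .
\]
Taking the Cauchy product and extracting the coefficient of $z^m$ with $b=m-a$ gives the sum over $0\le a\le\min(M,m)$ in \eqref{eq:fM-explicit}. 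Here $(-1)^mf_m^{[M]}$ is the $z^m$ coefficient, by the first part.

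The main obstacle is minimal. The only points needing care are the following:
\begin{itemize}
 \item[(i)] The $q$-binomial identities should be cited in the form (finite $q$-binomial theorem and its inverse via $1/(z;t)_M$), or proved by a quick induction on $M$.
 \item[(ii)] The sign bookkeeping in the substitution $z=-u$ must be checked.
 \item[(iii)] The summation range must be justified: terms with $a>M$ vanish because the first product has degree $M$, and terms with $a>m$ vanish since $b\ge0$.
\end{itemize}
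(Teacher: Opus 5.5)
Your proposal is correct and follows the paper's proof essentially verbatim. Like the paper, you specialize the one-row generating function \eqref{eq:one-row-generating} to $1,t,\ldots,t^{M-1}$, substitute $z=-u$ to match \eqref{eq:F-cyclotomic}, and multiply the finite and reciprocal $q$-binomial expansions to read off \eqref{eq:fM-explicit}; your expansion $(1+t^jz)/(1-t^jz)=1+2\sum_{k\geq1}t^{jk}z^k$ simply spells out the positivity step \eqref{eq:fM-sign-positive} that the paper leaves implicit.
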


\begin{proof}
By \eqref{eq:one-row-generating},
\[
 \sum_{m\geq0}q_m(1,t,\ldots,t^{M-1})z^m
 =\prod_{j=0}^{M-1}\frac{1+t^jz}{1-t^jz}.
\]
Substitute $z=-u$ and compare with
\eqref{eq:F-cyclotomic}; this proves
\eqref{eq:fM-Q-principal} and \eqref{eq:fM-sign-positive}.

For the explicit formula, use the finite and reciprocal
$q$-binomial expansions
\[
 \prod_{j=0}^{M-1}(1+t^ju)
 =\sum_{a=0}^{M}
   t^{\binom{a}{2}}\qbinom{M}{a}{t}u^a
\]
and
\[
 \prod_{j=0}^{M-1}\frac{1}{1-t^ju}
 =\sum_{b\geq0}
   \qbinom{M+b-1}{b}{t}u^b.
\]
The coefficient of $u^m$ in their product is the right side of
\eqref{eq:fM-explicit}.
\end{proof}

\begin{corollary}[Reciprocity for scalar factors]\label{cor:fM-reciprocity}
For every $m\geq0$,
\begin{equation}\label{eq:fM-reciprocity}
 f_m^{[M]}(t)
 =t^{(M-1)m}f_m^{[M]}(t^{-1}).
\end{equation}
Thus $(-1)^mf_m^{[M]}(t)$ is palindromic with darga
$(M-1)m$.
\end{corollary}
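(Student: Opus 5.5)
The plan is to prove the reciprocity at the level of the generating function \eqref{eq:F-cyclotomic}. I first compute what the substitution $t\mapsto t^{-1}$ does to the finite product, and then extract coefficients.

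\textbf{Step 1: transform the product.} By \eqref{eq:F-cyclotomic},
\[
 F_{t^{-1},t^{-M}}(u)=\prod_{j=0}^{M-1}\frac{1-t^{-j}u}{1+t^{-j}u}.
\]
Substitute $u\mapsto t^{M-1}u$ and reindex by $j\mapsto M-1-j$. The factor $t^{-j}t^{M-1}u$ becomes $t^{M-1-j}u$, which ranges over $t^{j'}u$ for $j'=0,\dots,M-1$. Therefore
\[
 \prod_{j=0}^{M-1}\frac{1-t^{M-1-j}u}{1+t^{M-1-j}u}
 =\prod_{j=0}^{M-1}\frac{1-t^{j}u}{1+t^{j}u}
 =F_{t,t^M}(u).
\]
Equivalently, in \eqref{eq:fM-definition} write $f_m^{[M]}(t^{-1})$ for the coefficients of the same product with $t$ replaced by $t^{-1}$. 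Then
\[
 \sum_{m\ge0} f_m^{[M]}(t^{-1})\,t^{(M-1)m}u^m=\sum_{m\ge0} f_m^{[M]}(t)\,u^m.
\]

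\textbf{Step 2: extract coefficients.} Comparing coefficients of $u^m$ gives $f_m^{[M]}(t)=t^{(M-1)m}f_m^{[M]}(t^{-1})$, which is \eqref{eq:fM-reciprocity}.

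\textbf{Step 3: palindromicity.} Multiply both sides of \eqref{eq:fM-reciprocity} by $(-1)^m$. Since $(-1)^mf_m^{[M]}(t)\in\mathbb{N}[t]$ is a polynomial by \cref{prop:fM-principal}, it satisfies $g(t)=t^{(M-1)m}g(t^{-1})$, which is exactly the definition of being palindromic of darga $(M-1)m$.

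\textbf{Alternative check.} One could instead use \eqref{eq:fM-Q-principal}. Because $q_m$ is homogeneous of degree $m$ and symmetric, reversing and scaling the alphabet gives
\[
 q_m(1,t^{-1},\dots,t^{-(M-1)})=t^{-(M-1)m}q_m(t^{M-1},\dots,1)=t^{-(M-1)m}q_m(1,t,\dots,t^{M-1}).
\]
This yields the same identity.

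\textbf{Main obstacle.} There is no real obstacle. The only point needing care is interpreting $f_m^{[M]}(t^{-1})$ as a polynomial substitution: each $f_m^{[M]}$ is a polynomial in $t$, so the substitution is legitimate in $\mathbb{Q}[t,t^{-1}]$, and the generating-function manipulation takes place in $\mathbb{Q}[t,t^{-1}][[u]]$.
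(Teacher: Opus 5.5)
Your proof is correct and is essentially the paper's argument. The paper uses the multiset identity $A_M(t)=t^{M-1}A_M(t^{-1})$, homogeneity of $q_m$, and \eqref{eq:fM-Q-principal}; that is exactly your ``alternative check,'' and your main route does the same alphabet reversal and rescaling directly on the finite product via $u\mapsto t^{M-1}u$.
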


\begin{proof}
As multisets,
\[
 A_M(t)=t^{M-1}A_M(t^{-1}).
\]
Since $q_m$ is homogeneous of degree $m$,
\[
 q_m[A_M(t)]
 =t^{(M-1)m}q_m[A_M(t^{-1})].
\]
Apply \eqref{eq:fM-Q-principal}.
\end{proof}

\begin{proposition}[Finite-order recurrence]\label{prop:finite-recurrence}
Set
\begin{equation}\label{eq:eMj-definition}
 e_j^{[M]}(t)
 :=e_j(1,t,\ldots,t^{M-1})
 =t^{\binom{j}{2}}\qbinom{M}{j}{t},
 \qquad 0\leq j\leq M.
\end{equation}
Then the coefficients in \eqref{eq:fM-definition} satisfy
\begin{equation}\label{eq:finite-recurrence}
 \sum_{j=0}^{\min(M,m)}
 e_j^{[M]}(t)f_{m-j}^{[M]}(t)
 =
 \begin{cases}
  (-1)^m e_m^{[M]}(t),&0\leq m\leq M,\\
  0,&m>M.
 \end{cases}
\end{equation}
In particular, for $m>M$,
\begin{equation}\label{eq:homogeneous-finite-recurrence}
 f_m^{[M]}(t)
 =-\sum_{j=1}^{M}e_j^{[M]}(t)f_{m-j}^{[M]}(t).
\end{equation}
\end{proposition}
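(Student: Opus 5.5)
The plan is to clear the denominator in the finite product \eqref{eq:F-cyclotomic}. Set
\[
 E(u):=(-u;t)_M=\prod_{j=0}^{M-1}(1+t^ju)=\sum_{j=0}^{M}e_j^{[M]}(t)u^j .
\]
The second description of $e_j^{[M]}(t)$ in \eqref{eq:eMj-definition} is the standard generating function of the elementary symmetric functions of $1,t,\ldots,t^{M-1}$. The closed form $t^{\binom{j}{2}}\qbinom{M}{j}{t}$ is the finite $q$-binomial expansion already used in the proof of \cref{prop:fM-principal}.

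With this notation, \eqref{eq:F-cyclotomic} reads $E(u)F_{t,t^M}(u)=(u;t)_M$. The numerator is
\[
 (u;t)_M=\prod_{j=0}^{M-1}(1-t^ju)=E(-u)=\sum_{j=0}^{M}(-1)^je_j^{[M]}(t)u^j .
\]
Both sides are formal power series in $u$, and $E(u)$ has constant term $1$, so the identity holds in $\K[[u]]$.

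Next I compare the coefficients of $u^m$ on both sides. On the left, substituting \eqref{eq:fM-definition}, the coefficient is $\sum_{j=0}^{\min(M,m)}e_j^{[M]}(t)f_{m-j}^{[M]}(t)$. The upper limit is $\min(M,m)$ because $e_j^{[M]}=0$ for $j>M$ and $f_k^{[M]}=0$ for $k<0$. On the right, the coefficient is $(-1)^me_m^{[M]}(t)$ when $m\leq M$ and $0$ when $m>M$. This is exactly \eqref{eq:finite-recurrence}.

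For \eqref{eq:homogeneous-finite-recurrence}, take $m>M$, so that $\min(M,m)=M$. The $j=0$ term is $f_m^{[M]}$ because $e_0^{[M]}=1$, and moving the remaining terms to the other side gives the claim.

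There is no real obstacle. The only points that need care are the sign identification $(u;t)_M=E(-u)$ and the range of summation, so I would spell those out explicitly.
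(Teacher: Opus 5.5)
Your proposal is correct and follows essentially the same route as the paper: expand $(u;t)_M$ and $(-u;t)_M$ as the signed and unsigned generating functions of the $e_j^{[M]}(t)$, then compare coefficients of $u^m$ in $(-u;t)_M\sum_{m\geq0} f_m^{[M]}(t)u^m=(u;t)_M$. Your explicit treatment of the summation range and of the $j=0$ term ($e_0^{[M]}=1$) spells out what the paper leaves implicit.
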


\begin{proof}
The numerator and denominator in \eqref{eq:F-cyclotomic} expand as
\[
 (u;t)_M
 =\sum_{j=0}^{M}(-1)^j e_j^{[M]}(t)u^j,
 \qquad
 (-u;t)_M
 =\sum_{j=0}^{M}e_j^{[M]}(t)u^j.
\]
Multiplying
\[
 (-u;t)_M\sum_{m\geq0}f_m^{[M]}(t)u^m=(u;t)_M
\]
and comparing the coefficient of $u^m$ gives
\eqref{eq:finite-recurrence}. The homogeneous recurrence follows when
$m>M$.
\end{proof}

\begin{corollary}[Cyclotomic mixed coefficients]
\label{cor:cyclotomic-vacuum}
For $r,m\geq0$,
\begin{equation}\label{eq:cyclotomic-vacuum}
 \cQ_{r,m}^{t,t^M}(X)
 =\sum_{k=0}^{m}(-1)^k
  q_k(1,t,\ldots,t^{M-1})
  q_{r+k}(X;t)q_{m-k}(X;t^M).
\end{equation}
\end{corollary}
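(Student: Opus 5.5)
This is a direct specialization of \cref{prop:mixed-two-row} to $s=t^M$, combined with \cref{prop:fM-principal}. The plan is to start from \eqref{eq:mixed-two-row}, which holds for independent parameters $t,s$ over $\K=\mathbb{Q}(t,s)$:
\[
 \cQ_{r,m}^{t,s}(X)=\sum_{k=0}^{m}f_k(t,s)\,q_{r+k}(X;t)\,q_{m-k}(X;s),
\]
and then set $s=t^M$. By the definition \eqref{eq:fM-definition} we have $f_k(t,t^M)=f_k^{[M]}(t)$. By \eqref{eq:fM-Q-principal}, $f_k^{[M]}(t)=(-1)^k q_k(1,t,\ldots,t^{M-1})$. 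Substituting gives \eqref{eq:cyclotomic-vacuum} at once.

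The only point needing care is the legitimacy of the specialization $s=t^M$. The statement is an identity of polynomials in the $p_n$ whose coefficients are rational functions in $t,s$, so the coefficients must have no poles along $s=t^M$.

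\begin{itemize}
\item The functions $q_j(X;s)$ are polynomial in $s$ by \eqref{eq:Ht-definition}.
\item The functions $q_j(X;t)$ do not involve $s$ at all.
\item The coefficients $f_k(t,s)$ are polynomials in $s$ with coefficients in $\mathbb{Q}(t)$. This follows from the recurrence \eqref{eq:f-recurrence}, whose only denominators are the $1-t^j$, or directly from the exponential form \eqref{eq:F-exponential}.
\end{itemize}

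Hence both sides of \eqref{eq:mixed-two-row} lie in $\mathbb{Q}(t)[s]\otimes\Gam$, and specializing $s\mapsto t^M$ is a ring homomorphism on this ring.

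It remains to check that the left side specializes correctly, i.e.\ that $\cQ_{r,m}^{t,s}|_{s=t^M}$ equals $\cQ_{r,m}^{t,t^M}$ as defined through $\Yop(w;t^M)$. For this I would argue directly rather than pass through the generic case. Apply \eqref{eq:cyclotomic-OPE} to $\vac$. Since the annihilation exponentials kill $\vac$, this gives
\[
 \Yop(z;t)\Yop(w;t^M)\vac=F_{t,t^M}(w/z)\,\Ht_t(X;z)\,\Ht_{t^M}(X;w).
\]
Extracting the coefficient of $z^rw^m$ then runs exactly as in the proof of \cref{prop:mixed-two-row}. This direct route avoids the specialization issue entirely, so I would present it as the main argument. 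I expect no real obstacle; the only subtlety is this well-definedness point.
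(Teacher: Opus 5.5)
Your proposal is correct and follows the paper's proof, which combines \cref{prop:mixed-two-row} at $s=t^M$ with \eqref{eq:fM-Q-principal}. Your check that the specialization $s\mapsto t^M$ is legitimate, or equivalently rerunning the vacuum computation directly from \eqref{eq:cyclotomic-OPE}, is a sound piece of extra care that the paper leaves implicit.
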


\begin{proof}
Combine \cref{prop:mixed-two-row} with
\eqref{eq:fM-Q-principal}.
\end{proof}

\begin{example}[The case $M=2$]\label{ex:M2-contraction}
For $M=2$,
\[
 F_{t,t^2}(u)
 =\frac{(1-u)(1-tu)}{(1+u)(1+tu)}.
\]
Here
\[
 e_1^{[2]}(t)=1+t,
 \qquad
 e_2^{[2]}(t)=t,
\]
and
\begin{align*}
 f_0^{[2]}(t)&=1,\\
 f_1^{[2]}(t)&=-2(1+t),\\
 f_2^{[2]}(t)&=2(1+t)^2,\\
 f_3^{[2]}(t)&=-2(1+t)(1+t+t^2).
\end{align*}
For $m>2$, the coefficients satisfy
\[
 f_m^{[2]}(t)
 =-(1+t)f_{m-1}^{[2]}(t)-t f_{m-2}^{[2]}(t).
\]
\end{example}

\end{document}